\documentclass{amsart}

\usepackage{amsmath}
\usepackage{amsfonts}
\usepackage{amssymb}
\usepackage{amscd}
\usepackage{amsthm}
\usepackage{setspace}
\usepackage{graphicx}
\usepackage{hyperref}
\usepackage[shortlabels]{enumitem}

\newtheorem*{mainthm}{Main Theorem}
\everymath{\displaystyle}

\newtheorem{theorem}{Theorem}[section]
\newtheorem{lemma}[theorem]{Lemma}
\newtheorem{corollary}[theorem]{Corollary}
\newtheorem{proposition}[theorem]{Proposition}

\theoremstyle{definition}
\newtheorem{remark}[theorem]{Remark}

\newtheorem{setup}[theorem]{Setup}


\newcommand{\F}{\mathbb{F}}

\newcommand{\s}{\mathbb{S}}
\newcommand{\V}{\mathbb{V}}

\newcommand{\Z}{\mathbb{Z}}
\newcommand{\Hom}{\textrm{Hom}}
\newcommand{\M}{\mathbb{M}}

\newcommand{\m}{\mathfrak{m}}

\newcommand{\cnl}{cnl(\pmb{k})}

\newenvironment{step}
{\proof}
{  
  \endproof   }

\numberwithin{equation}{section}

\begin{document}

\title{The inverse deformation problem}
\author{Timothy Eardley and Jayanta Manoharmayum }
\address{School of Mathematics and Statistics, University of
    Sheffield,
    Sheffield S3 7RH,
    U.K.}
\email{pmp09tae@sheffield.ac.uk}
\email{J.Manoharmayum@sheffield.ac.uk}

\date{}
\subjclass[2010]{20C20, 20E18, 11F80}

\begin{abstract}
We show the inverse deformation problem has an affirmative answer: given a complete
local noetherian ring $A$ with finite residue field $\pmb{k}$, we show that there is a
 topologically finitely generated profinite group $\Gamma$ and an
absolutely irreducible continuous representation $\overline \rho:\Gamma\to GL_n(\pmb{k})$ such that $A$ is the universal deformation ring for $\Gamma,\overline\rho$.

\end{abstract}

\maketitle

\section{Introduction}

The purpose of this paper is to answer in the affirmative   the \emph{inverse deformation problem: 
 Can a given complete noetherian local ring with finite residue field be realised as the universal deformation ring of some residual representation?}
The  problem originated from a question of Flach in \cite{chinburg}; the above   formulation is due to  by Bleher, Chinburg and De Smit in \cite{bcd1}.

To begin with we provide a brief, utilitarian introduction to deformation theory following Mazur (see \cite{mazur1}, \cite{mazur2} for details), and in the process we fix notations and conventions which will be in place throughout this article.

In this paper  $\pmb{k}$ is a finite field of characteristic $p$ and  $W:=W(\pmb{k})$ is its Witt ring. We will call a complete noetherian local $W$-algebra with residue field $\pmb{k}$ a \emph{$\cnl$-ring}; the morphisms between $\cnl$-rings are local homomorphisms of $W$-algebras and shall henceforth be referred to as $\cnl$-ring homomorphisms. If $A$ is a $\cnl$-ring  then we write $\m_A$ for its maximal ideal, $\iota_A: W\to A$ for the $W$-algebra structure and set $W_A:=\iota_A(W)$.

Let $\Gamma$ be a topologically finitely generated profinite group and let $\overline\rho :\Gamma \to GL_n(\pmb{k})$ be a  continuous absolutely irreducible representation which shall be called the residual representation. Then Proposition 1 from Section 1.2 of \cite{mazur1} asserts the existence of a universal deformation for $\overline\rho$. By this we mean that there exists a unique $\cnl$-ring $R$ and a continuous representation
$\rho_R: \Gamma\to GL_n(R)$ lifting $\overline\rho$ (that is, $\overline\rho=\rho_R\mod{\m_R}$)
with  the following universal property: if $A$ is a $\cnl$-ring  and
$\rho_A:\Gamma\to GL_n(A)$ is a continuous representation lifting $\overline\rho$, then there is a unique morphism of $\cnl$-rings $\pi :R\to A$  such that $\pi\circ\rho_R$ is strictly equivalent to $\rho_A$.

The main result of this paper is the following theorem which gives a complete answer to the inverse deformation problem.

\begin{mainthm} Let $\pmb{k}$ be a finite field of characteristic $p$. Then every $\cnl$-ring is a universal deformation ring.

More precisely,  let $A$ be a $\cnl$-ring and let $\Gamma:=SL_n(A)$ where
 $n\geq 3$.  If $\pmb{k}= \F_2$, then in addition assume that  $n\geq 5$.
  Let  $\overline\rho :\Gamma\to GL_n(\pmb{k})$ be the reduction modulo maximal ideal of the standard
representation $\rho_A:\Gamma\to  SL_n(A)$. Then $A$, together with $\rho_A$, is the universal deformation ring for $\Gamma$ and $\overline\rho$.
\end{mainthm}

The outline of the rest of the paper is as follows.
In Section \ref{prelim} we present preparatory material concerning the structure $SL_n$ which will be essential to the proof of the main theorem.  The central result is Proposition \ref{main prop} which will allow us to describe the universal deformation in detail.
The rest of the section gives results which are either well known or elementary.
In Section \ref{main thm proof} we provide the proof of the main theorem, formulated as a series of steps, utilising the structural results of the previous section;
finally,  we prove Proposition \ref{main prop}  in Section \ref{prop proof} following a simplified form of the argument used in \cite{manoharmayum}.

We will now give a brief overview of the developments concerning the inverse deformation problem. However, for a more detailed account see \cite{bcd2}. As mentioned earlier, the inverse deformation problem originated from a question by Flach in \cite{chinburg} which asked if it is possible for a universal deformation ring to not be a complete intersection ring. The motivation behind this question  was that up to that point, although there had been many explicit calculations of deformation rings none were not complete intersection rings.  The first example of a universal deformation ring which was not a complete intersection was $W[[t]]/(t^2,2t)$  given in \cite{bc}  (also see \cite{bc2}). This example was greatly generalised to provide a positive answer for all rings of the form $W(\pmb{k})[[t]]/(p^nt,t^2)$ in  \cite{bcd1}. Furthermore, in \cite{bcd2}  a categorisation of all possible pairs $(\Gamma, \overline{\rho})$ which have  $R=W(\pmb{k})[[t]]/(p^nt,t^2)$ as its universal deformation ring was given. Another class of none complete intersection rings were shown by Rainone in \cite{rainone} to be universal deformation rings, namely the rings $\mathbb{Z}_p[[t]]/(p^n,p^mt)$ where  $p>3$ and $n>m$ are  positive integers.

\section{Preliminaries: some properties of $SL_n$}\label{prelim}

We shall now describe certain aspects of the structure of special linear groups over $\cnl$-rings;
these structural properties are ingredients in the proof of the main theorem.
The central result is Proposition \ref{main prop} below which  allows us to assume, in the set-up described in the theorem, that the image of the universal deformation contains $SL_n(W_R)$; this will be used in Section \ref{main thm proof} to specify the images of elements of $\Gamma$ under the universal deformation.

\begin{proposition}\label{main prop}
 Let $(A,\m_A)$ be a $\cnl$-ring. Fix an integer $n\geq 3$. Additionally, we require that  $n\geq 5$  if $\pmb{k}$ is $\F_2$.
Suppose  $G$ is  a closed subgroup
 of $SL_n(A)$ with full residual image i.e. $G\mod{\m_A}=SL_n(\pmb{k})$. Then there exists an  $X\in GL_n(A)$ with $X\equiv I \mod{\m_A}$ such that $XGX^{-1}\supseteq SL_n(W_A)$.
 \end{proposition}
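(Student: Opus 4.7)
The group $SL_n(W_A)$ is generated by the elementary transvections $E_{ij}(w)=I+we_{ij}$ with $i\neq j$ and $w\in W_A$, so it suffices to produce all such transvections in some conjugate $XGX^{-1}$ with $X\equiv I\bmod\m_A$. My plan has three stages: lift a maximal torus of $SL_n(\pmb{k})$ of prime-to-$p$ order into $G$; conjugate it to a standard form by a small $X$; then use its adjoint action on $G$ to produce the root subgroups $U_{ij}(W_A)=\{E_{ij}(w):w\in W_A\}$.

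For the torus step, note that the reduction kernel $K:=\ker(SL_n(A)\twoheadrightarrow SL_n(\pmb{k}))$ consists of matrices $I+M$ with $M\in M_n(\m_A)$ and is therefore pro-$p$. Let $T(\pmb{k})\subseteq SL_n(\pmb{k})$ be a maximal torus of order prime to $p$: when $\pmb{k}\neq\F_2$ the split diagonal torus $D(\pmb{k})$ of order $(|\pmb{k}|-1)^{n-1}$ works, while for $\pmb{k}=\F_2$ the split torus is trivial and one instead uses an anisotropic torus, whose construction in a sufficiently useful form requires $n\geq 5$. The preimage of $T(\pmb{k})$ in $G$ is a $K$-extension of $T(\pmb{k})$, and profinite Schur--Zassenhaus produces a splitting $T'\cong T(\pmb{k})$ inside $G$. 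Each $d\in T'$ is semisimple of prime-to-$p$ order; applying Hensel's lemma to the characteristic polynomial of a regular element of $T'$ (choosing eigenvectors that reduce to the standard basis) simultaneously diagonalizes $T'$ via some $X\equiv I\bmod\m_A$, and the resulting $T'$ is forced to equal the Teichm\"uller torus $T_{\mathrm{Teich}}(W_A)\subseteq SL_n(W_A)$, since this is the unique prime-to-$p$ torsion subgroup of the diagonal of $SL_n(A)$ surjecting onto $T(\pmb{k})$.

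The main work is then extracting root subgroups. For each pair $i\neq j$, choose some $\tilde u_{ij}\in G$ reducing to $E_{ij}(1)$. Conjugation by $T_{\mathrm{Teich}}(W_A)$ decomposes the Lie algebra $\mathfrak{sl}_n(A)$ into its root-character isotypic components, and because $|T|$ is invertible in $W_A$ one has a well-defined projector onto each $\alpha_{ij}$-eigenspace. Promoting this projector to the group level --- producing a genuine element $u_{ij}=E_{ij}(a_{ij})\in G$ with $a_{ij}\equiv 1\bmod\m_A$ --- calls for a successive approximation modulo $\m_A^k$, exploiting the $T$-equivariant filtration of $K$ by powers of $\m_A$ together with the vanishing of $H^1(T,-)$ on coefficients killed by $|T|$ at each stage. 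Once $u_{ij}$ is produced, conjugating by $T_{\mathrm{Teich}}(W_A)$ scales $a_{ij}$ by Teichm\"uller units, and multiplying in the abelian group $U_{ij}(A)\cong(A,+)$ gives $E_{ij}(a_{ij}w)\in G$ for all $w\in W_A$. The Chevalley commutator identity $[E_{ij}(x),E_{jk}(y)]=E_{ik}(xy)$, available because $n\geq 3$, together with a further inductive cleanup, then lets one eliminate the factor $a_{ij}$ and conclude that $U_{ij}(W_A)\subseteq G$ for every pair, whence $SL_n(W_A)\subseteq G$.

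The hardest step is expected to be the group-level root-subgroup extraction and the subsequent cleanup: the character projection is painless on the Lie algebra but requires a delicate inductive lift to $G$, and removing the factor $a_{ij}$ to pass from $E_{ij}(a_{ij}W_A)$ to $E_{ij}(W_A)$ needs careful use of commutator relations --- which is also why the hypotheses $n\geq 3$, and $n\geq 5$ for $\pmb{k}=\F_2$ (where a split torus is not available at all), enter the statement.
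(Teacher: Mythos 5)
Your proposal takes a genuinely different route from the paper. The paper's proof of Proposition~\ref{main prop} is cohomological: for $|\pmb{k}|\geq 5$ (and $\F_4$, $n\geq 4$) it cites \cite{manoharmayum}; for the remaining small fields it reduces to artinian quotients and studies the extension $0\to\M_0\to\widetilde{G}\to SL_n(W_B)\to 1$, using non-splitting of $I\to\Gamma_m\to SL_n(W_{m+1})\to SL_n(W_m)\to I$ (Sah), the computation of $H^1(SL_n(W_m),\M_0)$ via transgression, and the injectivity of $H^2(SL_n(W_m),\s)\to H^2(SL_n(W_m),\M_0)$ to rule out the intermediate cases. You instead propose lifting a prime-to-$p$ torus via Schur--Zassenhaus, standardizing it by conjugation, and then extracting root subgroups by a character-projection/successive-approximation argument followed by a commutator cleanup. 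This is a reasonable ``recognize $SL_n$ by torus and roots'' strategy, but it has two genuine gaps.

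First, the $\pmb{k}=\F_2$ case is not actually handled by your framework. There is no split torus in $SL_n(\F_2)$, and the anisotropic (Singer) torus of order $2^n-1$ has \emph{no} eigenvectors over $\F_2$ --- its eigenvalues lie in $\F_{2^n}\setminus\F_2$, and the eigenvalues of a Hensel lift lie in $W(\F_{2^n})$, not in $W_A$. So the step ``diagonalizes $T'$ via some $X\equiv I\bmod\m_A$ \ldots equal to the Teichm\"uller torus $T_{\mathrm{Teich}}(W_A)$'' is vacuous for $\F_2$: the diagonal Teichm\"uller torus in $SL_n(\Z_2)$ is trivial, and the root-character projectors onto $\alpha_{ij}$-eigenspaces exist only after base change to $W(\F_{2^n})$, where they are permuted by Frobenius rather than being individually defined over $W_A$. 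This matters because the small fields $\F_2,\F_3,\F_4$ are exactly the new content of the proposition (the others follow from \cite{manoharmayum}), so the argument would have to be essentially reworked in precisely the cases that are the point.

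Second, the final ``cleanup'' step is asserted but not justified. After the projection step you obtain, at best, root submodules $S_{ij}:=\{x\in A: E_{ij}(x)\in X_1GX_1^{-1}\}$, each a closed $W_A$-submodule of $A$ containing some unit $a_{ij}\equiv 1\bmod\m_A$, hence $a_{ij}W_A\subseteq S_{ij}$, and $S_{ij}S_{jk}\subseteq S_{ik}$ by the Steinberg commutator relation. To conclude you would need a further conjugation $X_2\equiv I\bmod\m_A$ (necessarily diagonal, to preserve the torus) with $d_id_j^{-1}S_{ij}\supseteq W_A$ for all $i\neq j$. Choosing $d_i=a_{in}^{-1}$ makes $d_id_j^{-1}a_{ij}=a_{ij}a_{jn}a_{in}^{-1}$; the commutator relation only gives $a_{ij}a_{jn}\in S_{in}\supseteq a_{in}W_A$, not $a_{ij}a_{jn}\in a_{in}W_A^\times$, so the required cocycle-type condition on the $a_{ij}$ is not forced. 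Since $n-1$ diagonal parameters must normalize $n(n-1)$ root coefficients, some identity among the $a_{ij}$ beyond what you have derived is essential, and it is not supplied. These are precisely the places where the paper's argument leans on the $H^1$/$H^2$ computations and non-splitting results of Proposition~\ref{SL_n(W_m)}; without an analogous input, the direct approach does not close.
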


When $\pmb{k}$ is not equal to either $\F_2$ or $\F_3$  or when $\pmb{k}=\F_4$ and $n\neq 3$ this proposition  is covered by the main theorem of \cite{manoharmayum}. The argument in \emph{loc.\ cit.} required certain cohomological properties of $SL_n(\pmb{k})$ which followed from works of Cline, Parshall and Scott in \cite{CPS}, and Quillen in \cite{quillen}. In this paper we shall indicate how the same argument may be recovered in the excluded cases by using
results of Sah \cite{sah1}, \cite{sah2}. In order not to disrupt the flow of proving the main theorem we will defer the proof of Proposition \ref{main prop} until Section \ref{prop proof}.

We shall now record some standard properties of $SL_n$ and of elementary matrices (see \cite{rosenberg} or \cite{srinivas}, for instance). \emph{For the remainder of  this section,   $n\geq 3$ is a fixed integer. All matrices are $n$ by $n$ matrices  unless specified otherwise.}

Let  $A$ be a ring.  If $1\leq i, j\leq n$ with $i\neq j$ and $x\in A$, then $E_{ij}(x)$ denotes the elementary  $n$ by $n$ matrix
which has $1$s along the diagonal, $x$ at the $(i,j)$-th entry and $0$s elsewhere.
In the next well known result, formulated as a proposition for convenience, we recall that the elementary matrices satisfy the Steinberg relations and state an important result about the structure of  $SL_n(A)$.
\begin{proposition}\label{steinberg}  Let  $A$ be a ring. Then the  elementary matrices generate a subgroup of $SL_n(A)$ and satisfy the following Steinberg relations.
\begin{enumerate}[(a)]
\item $E_{ij}(x)E_{ij}(y)=E_{ij}(x +y)$,
\item $[E_{ij}(x),E_{jk}(y)]=E_{ik}(x y)$ if $i\neq k$
\item $[E_{ij}(x),E_{kl}(y)]=1$ if $i\neq l, j\neq k$
\end{enumerate}
Moreover, if $A$ is a  local ring then the elementary matrices $E_{ij}(x)$ with $x\in A$ generate the whole of $SL_n(A)$ and hence $SL_n(A)$ is a perfect group.
\end{proposition}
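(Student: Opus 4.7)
The plan is to treat relations (a)--(c) as direct matrix computations, note that each $E_{ij}(x)$ has determinant one by triangularity, and then prove the generation statement over local rings by a row-reduction argument; perfectness will be a formal consequence of (b) together with the hypothesis $n\geq 3$. Writing $E_{ij}(x)=I+xe_{ij}$ with $e_{ij}$ the standard matrix unit, the product law $e_{ij}e_{kl}=\delta_{jk}e_{il}$ handles all three relations. Relation (a) is immediate from $e_{ij}^2=0$ (valid since $i\neq j$). For (b), I would expand $E_{ij}(x)E_{jk}(y)E_{ij}(-x)E_{jk}(-y)$; the hypothesis $i\neq k$, combined with the standing $i\neq j$ and $j\neq k$, causes every cross term except $xy\,e_{ij}e_{jk}=xy\,e_{ik}$ to vanish, leaving $E_{ik}(xy)$. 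Relation (c) is even simpler: $j\neq k$ and $i\neq l$ make both $e_{ij}e_{kl}$ and $e_{kl}e_{ij}$ zero, so $E_{ij}(x)$ and $E_{kl}(y)$ commute term by term.

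For the generation claim, suppose $A$ is local with maximal ideal $\m_A$ and fix $M\in SL_n(A)$; the plan is induction on $n$ via row and column reduction. The starting point is that some entry of the first column of $M$ must be a unit: if $m_{11},\dots,m_{n1}$ all lay in $\m_A$, then expanding $\det M=1$ along that column would force $1\in\m_A$, a contradiction. Having located a unit $m_{i1}$, a signed transposition from the Weyl group of $SL_n$, realised as the product $E_{1i}(1)E_{i1}(-1)E_{1i}(1)$, moves $m_{i1}$ into position $(1,1)$; elementary left and right multiplications $E_{j1}(-m_{j1}m_{11}^{-1})$ and $E_{1j}(-m_{11}^{-1}m_{1j})$ then clear the rest of the first column and first row. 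The $(1,1)$ entry, still a unit $u$, is rescaled to $1$ using the Whitehead-lemma identity expressing $\text{diag}(u,u^{-1},1,\dots,1)$ as a product of elementary matrices over a local ring, and the residual block lies in $SL_{n-1}(A)$, so induction finishes.

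Perfectness is then immediate from (b): since $n\geq 3$, for any generator $E_{ij}(x)$ one may choose $k\notin\{i,j\}$ and write $E_{ij}(x)=[E_{ik}(x),E_{kj}(1)]$, exhibiting every generator, and hence every element of $SL_n(A)$, as a commutator. The main obstacle is the generation step, where the localness of $A$ is essential (it is precisely what makes the unit-rescaling factor $\text{diag}(u,u^{-1},1,\dots,1)$ lie in the subgroup generated by elementary matrices, i.e.\ $K_1(A)=A^\times$); the Steinberg relations themselves and the perfectness statement are formal once the generation is in hand. As this result is classical I would simply cite \cite{rosenberg} or \cite{srinivas} for the detailed row-reduction verification.
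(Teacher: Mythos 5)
Your proof is correct and follows the same standard route the paper defers to its references for: direct computation with matrix units $e_{ij}e_{kl}=\delta_{jk}e_{il}$ for the Steinberg relations, Gaussian elimination over a local ring for generation, and the identity $E_{ij}(x)=[E_{ik}(x),E_{kj}(1)]$ for perfectness when $n\geq 3$. The paper gives no details, only citing \cite{srinivas} for the row-reduction argument and calling perfectness ``well known,'' so your explicit write-up simply fills in that citation.

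One small slip in the closing parenthetical: the assertion that localness is ``precisely what makes the unit-rescaling factor $\mathrm{diag}(u,u^{-1},1,\dots,1)$ lie in the subgroup generated by elementary matrices'' is not right. That statement is Whitehead's lemma and holds over any commutative ring via
\[
\begin{pmatrix} u & 0 \\ 0 & u^{-1}\end{pmatrix}
= E_{12}(u)E_{21}(-u^{-1})E_{12}(u)\cdot E_{12}(-1)E_{21}(1)E_{12}(-1),
\]
and likewise $K_1(A)\cong A^\times$ for a local ring is a conclusion of the elimination argument rather than an input. The place localness actually enters your proof is exactly where you used it in the body: the first column of $M\in SL_n(A)$ cannot lie entirely in $\m_A$, since $\det M=1\notin\m_A$. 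That is the step that fails over general commutative rings (where $SK_1(A)$ can be nontrivial); everything downstream of locating that unit is ring-independent.
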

The second part of the propostion is essentially covered by the discussion following Example 1.6 in \cite{srinivas}. The argument there shows that $SL_n(A)$ is generated by elementary matrices if $n\geq 2$. Perfectness for $n\geq 3$ is well known and follows from standard identities for elementary matrices. The proposition has the following corollary which is useful in determining the image of a deformation.
\begin{corollary}\label{subgroupimage} Let $A$ and $B$ be $\cnl$-rings. Then the image of a group homomorphism $\rho:SL_n(A)\rightarrow GL_n(B)$ is in fact a subgroup of $SL_n(B)$.
\end{corollary}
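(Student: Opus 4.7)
The plan is to exploit perfectness of $SL_n(A)$ together with the fact that the determinant maps $GL_n(B)$ to an abelian group. Specifically, by Proposition \ref{steinberg}, since $A$ is a local $\cnl$-ring and $n\geq 3$, the group $SL_n(A)$ is generated by elementary matrices and is perfect, that is, $SL_n(A)=[SL_n(A),SL_n(A)]$.

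I would then consider the composition
\[
\det\circ\,\rho\colon SL_n(A)\longrightarrow GL_n(B)\longrightarrow B^{\times},
\]
which is a group homomorphism into the abelian group $B^{\times}$. Any homomorphism from a perfect group into an abelian group is trivial, so $\det(\rho(g))=1$ for every $g\in SL_n(A)$. Hence $\rho(SL_n(A))\subseteq\ker(\det)=SL_n(B)$, which is exactly what we want.

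There is really no obstacle here; the only inputs needed are the perfectness statement from Proposition \ref{steinberg} and the elementary observation that $[GL_n(B),GL_n(B)]\subseteq SL_n(B)$ (equivalently, that $\det$ has abelian target). The hypothesis $n\geq 3$ is used precisely to invoke perfectness of $SL_n$ over a local ring.
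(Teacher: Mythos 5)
Your proof is correct and takes essentially the same approach as the paper: the paper checks generator-by-generator that each $E_{ij}(x)$ is a commutator (via the Steinberg relation $[E_{ik}(x),E_{kj}(1)]=E_{ij}(x)$) so its image has determinant $1$, whereas you package the same observation abstractly as ``perfect group maps trivially into an abelian group.'' Both arguments rest on exactly the same ingredients from Proposition~\ref{steinberg}.
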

\begin{proof} By Proposition \ref{steinberg} the image of $\rho$ is generated by the
images of elementary matrices. Given an elementary matrix $E_{ij}(x)$ in $SL_n(A)$,
pick an integer $k$ between $1$ and $n$ distinct from $i$, $j$. The relation
$[E_{ik}(x),E_{kj}(1)]=E_{ij}(x)$ then implies that the determinant of
$\rho(E_{ij}(x))$ must be $1$.
\end{proof}

We will need to conjugate the  elementary matrix $E_{1n}(x)$ to another elementary matrix  $E_{ij}(x)$, and we want to be able to affect the conjugation independently of the ring $A$ in some sense using signed permutation matrices.
For $1\leq r\neq s\leq n$, set
\begin{itemize}
\item   $(rs)\in GL_n(\Z)$ to be the permutation  matrix which differs from the identity only in that its $r$-{th} and $s$-{th} rows have been transposed,
    \item $D_r$ to be the  diagonal matrix that differs from the identity only in its $(r,r)$-th entry which is $-1$.
    \end{itemize}
Note that $(rs)$ and $D_r$ have determinant $-1$. We  now define certain signed permutation matrices  in  $SL_n(\Z)$ as follows: Given $1\leq i,j\leq n$ with $i\neq j$, define $T_{ij}\in SL_n(\Z)$
by
\begin{equation}\label{Tij defn}
T_{ij}:=\begin{cases}
I & \text{if $(i,j)=(1,n)$,}\\
D_2(1n) & \text{if $(i,j)=(n,1)$},\\
D_n(jn) & \text{if $i=1$ and $j\neq n$},\\
D_1(1i) & \text{if $i\neq 1$ and $j=n$},\\
(1i)(nj) & \text{if $i\neq 1$ and $j\neq n$ and $(i,j)\neq (n,1)$}
\end{cases}
\end{equation}
If $X\in GL_n(\Z)$ then its image in $GL_n(A)$ under the unique ring homomorphism $\Z\to A$ will also be denoted by $X$. We then have the following proposition.
\begin{proposition}\label{transpositions} Let $A$ be a ring.
\begin{enumerate}[(i)]
\item Suppose $X\in GL_n(A)$.  Then $XE_{ij}(1)=E_{ij}(1)X$ for all elementary matrices
$E_{ij}(1)$ with $1\leq i< j\leq n$ if and only if $X=\lambda E_{1n}(x)$ for some $\lambda\in A^\times$, $x\in A$.

\item
$T_{ij}E_{1n}(x)T_{ij}^{-1}=E_{ij}(x)$
for all $1\leq i\neq j\leq n$
 and $x \in A$.
 \end{enumerate}
\end{proposition}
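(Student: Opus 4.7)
For part (i), the reverse direction is immediate: any scalar matrix commutes with everything, and by Steinberg relation (c) of Proposition~\ref{steinberg}, $E_{1n}(x)$ commutes with every $E_{ab}(1)$ having $1 \le a < b \le n$ (the Steinberg conditions $1 \ne b$ and $n \ne a$ hold since $a < b$ forces $b \ge 2$ and $a \le n-1$). For the forward direction, I write $E_{ij}(1) = I + e_{ij}$ with $e_{ij}$ the standard matrix unit, so the commuting condition becomes $X e_{ij} = e_{ij} X$. Computing entrywise, $(X e_{ij})_{k\ell} = X_{ki}\delta_{j\ell}$ while $(e_{ij}X)_{k\ell} = \delta_{ki} X_{j\ell}$; equating these for each fixed $i<j$ forces $X_{ii} = X_{jj}$, column $i$ of $X$ to vanish off-diagonal, and row $j$ of $X$ to vanish off-diagonal. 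Letting $(i,j)$ range over all pairs $1 \le i < j \le n$: the diagonal collapses to a single scalar $\lambda := X_{11}$; columns $1,\dots,n-1$ and rows $2,\dots,n$ are zero off-diagonal; and the only off-diagonal position not thereby killed is $(1,n)$. Hence $X = \lambda I + \mu e_{1n}$ for some $\mu \in A$. Since $X \in GL_n(A)$, $\det X = \lambda^n$ is a unit, so $\lambda \in A^\times$; setting $x := \lambda^{-1}\mu$ yields $X = \lambda E_{1n}(x)$.

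Part (ii) is a direct case analysis along the branches of (\ref{Tij defn}). The unifying observation is that for any signed permutation matrix $T$ with $T e_a = \epsilon_a e_{\sigma(a)}$ (where $\epsilon_a \in \{\pm 1\}$ and $\sigma$ is a permutation of $\{1,\dots,n\}$), a short computation gives
\[ T\,E_{1n}(x)\,T^{-1} \;=\; I + x\,\epsilon_1\epsilon_n\, e_{\sigma(1),\sigma(n)}, \]
so in each branch it suffices to verify the three conditions $\sigma(1) = i$, $\sigma(n) = j$, and $\epsilon_1\epsilon_n = 1$. The diagonal $D_r$ factors appearing in the $(n,1)$, $i=1$, and $j=n$ branches are inserted precisely to force $\det T_{ij} = +1$ while only flipping some $\epsilon_r$ with $r \notin \{1,n\}$ (exploiting $n \ge 3$, so that the index $2$ is available and distinct from both $1$ and $n$); these cases therefore follow by inspection. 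In the generic sub-case of $T_{ij} = (1i)(nj)$ with $\{1,i\} \cap \{n,j\} = \emptyset$, the two transpositions commute, $\sigma$ is a product of disjoint $2$-cycles, and the identities $\sigma(1) = i$, $\sigma(n) = j$ are immediate.

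The only delicate point is the residual sub-cases of $(1i)(nj)$ in which the supports overlap ($i = n$ or $j = 1$), so that the product collapses to a $3$-cycle rather than a disjoint pair of transpositions. Here one must track how the $3$-cycle acts on the pair $(1,n)$ by hand: for instance, when $i = n$ the product $(1n)(nj)$ is the cycle sending $1 \mapsto n \mapsto j \mapsto 1$, giving $\sigma(1) = n = i$ and $\sigma(n) = j$ as required; a symmetric check handles $j = 1$. No individual computation is deep, so the main obstacle is purely bookkeeping: keeping indices straight across the five branches of (\ref{Tij defn}) together with the two overlap sub-cases inside the final branch.
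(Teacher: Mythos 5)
Part~(i) is correct and follows essentially the same route as the paper (reducing $XE_{ij}(1)=E_{ij}(1)X$ to an identity among matrix units and tracking which entries are forced to vanish as $(i,j)$ ranges over all $i<j$).

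Part~(ii) contains a genuine gap. Your unifying identity
\[
T\,E_{1n}(x)\,T^{-1} \;=\; I + x\,\epsilon_1\epsilon_n\, e_{\sigma(1),\sigma(n)}
\]
is correct, and the generic branch, the $i=n$ overlap branch, and the three $D_r$-branches all check out under the standard convention $(rs)e_a = e_{(rs)(a)}$ with matrix products acting right-to-left. But the claim that ``a symmetric check handles $j=1$'' is false, and that sub-case is precisely where the verification breaks. Take $j=1$ and $i\notin\{1,n\}$, so $T_{i1}=(1i)(n1)$. Tracing the $3$-cycle: $(n1)$ first sends $1\mapsto n$, and $(1i)$ then fixes $n$ (since $n\ne 1,i$), so $\sigma(1)=n\neq i$; likewise $\sigma(n)=i\neq 1$. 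Hence $T_{i1}E_{1n}(x)T_{i1}^{-1}=E_{ni}(x)$, \emph{not} $E_{i1}(x)$. Concretely, with $n=3$, $(i,j)=(2,1)$, one computes $T_{21}=(12)(31)=\left(\begin{smallmatrix}0&1&0\\0&0&1\\1&0&0\end{smallmatrix}\right)$ and $T_{21}E_{13}(x)T_{21}^{-1}=E_{32}(x)$, contradicting the stated conclusion $E_{21}(x)$. The asymmetry between the two overlap sub-cases is intrinsic: for $i=n$ the cycle $(1\,n\,j)$ carries the pair $(1,n)$ to $(n,j)=(i,j)$ as wanted, whereas for $j=1$ the cycle $(1\,n\,i)$ carries $(1,n)$ to $(n,i)$ instead of $(i,1)$, and one cannot fix this by reversing the composition order since that would instead break the $i=n$ case. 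In fact this exposes an error in the formula (\ref{Tij defn}) itself: the last branch should be modified for $j=1$ (for example $T_{i1}=(n1)(1i)$, read as the indicated matrix product, does give $\sigma(1)=i$, $\sigma(n)=1$). The paper never exposes this because it dismisses part~(ii) as ``a straightforward calculation which we skip''; your proposal attempts the calculation but waves away exactly the sub-case where it fails. (The main theorem survives, since the missing matrices $E_{i1}(s(x))$ can be regenerated from $E_{in}(s(x))$ and $E_{n1}(1)\in SL_n(W_R)$ via the Steinberg commutator relation, but that argument does not appear in either the paper or your proposal.)
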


We give a brief sketch of the proof. That $\lambda E_{1n}(x)$ commutes with $E_{ij}(1)$, $1\leq i<j\leq n$ is clear from the Steinberg relations (Proposition \ref{steinberg}(c)). For the other direction, if we let $e_{st}$ denote the matrix unit which has a $1$ in the $(s,t)$-th entry and zeros elsewhere, then the relation $E_{ij}(1)X=XE_{ij}(1)$
implies
\[
\sum_{m=1}^{n}x_{jm}e_{im}=\sum_{m=1}^{n}x_{mi}e_{mj}
\]
and desired conclusion follows. The second part is a straightforward calculation which we skip.

\section{Proof of the main theorem}\label{main thm proof}

We recall from the statement of the main theorem that $(A,\m_A)$ is a fixed $\cnl$-ring and $\Gamma:=SL_n(A)$ where  $n\geq 3$ except in  the case when $\pmb{k}=\F_2$ where we exclude $n=4$.
We write $\rho_A:\Gamma\to SL_n(A)$ for the standard representation of $\Gamma$. The residual representation is then taken to be
\[\overline\rho:=\rho_A\mod{\m_A} : \Gamma\to SL_n(\pmb{k}). \]
Note that $\overline\rho$ is clearly surjective.

We will show that $A$ together with $\rho_A$ is the universal deformation ring for $\overline\rho$. For clarity the argument is split into four small steps.

\begin{step}[Step 1] We begin by observing some characteristics of the universal deformation.
Let $(R,\m_R)$ together with $\rho_R:\Gamma\to GL_n(R)$ be the universal deformation ring for
$\overline\rho:\Gamma\to SL_n(\pmb{k})$.
Note that  $\rho_R$ takes values in $SL_n(R)$ by Corollary \ref{subgroupimage}, and that $\rho_R(\Gamma)\mod{\m_R}=SL_n(\pmb{k})$. Therefore, we may invoke Proposition \ref{main prop} and upon replacement of $\rho_R$ with a strictly equivalent representation we may assume that $\rho_R(\Gamma)$  contains a copy of $SL_n(W_R)$. \end{step}

\begin{step}[Step 2] We now note that the unique  $\cnl$-ring homomorphism $\pi:R\to A$ which is associated with $\rho_A$ by the universal property of $R$, i.e. so that $\pi\circ \rho_R$ is strictly equivalent to $\rho_A$, is compatible with $W$-algebra structure morphisms $\iota_A$ and $\iota_R$.
Schematically, the diagram
\begin{equation}\begin{CD}
W@= W\\
@VV{\iota_R}V   @VV{\iota_A}V\\
R@>{\pi}>> A
\end{CD}
\end{equation}
commutes. Let's make the following observations.
\begin{proposition}\label{assertion1}\mbox{}
\begin{enumerate}[(i)]
\item $\rho_R:\Gamma\to SL_n(R) $ is injective and $\pi: \rho_R(\Gamma)\to SL_n(A)$ is an isomorphism.
\item The map $\pi:R\to A$ is surjective.
\item The restriction  $\pi|_{W_R}: W_R\to W_A$ is an isomorphism.
\end{enumerate}
\end{proposition}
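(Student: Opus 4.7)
The argument rests on a single observation: because $\rho_A$ is the standard representation---that is, the identity embedding $\Gamma = SL_n(A) \hookrightarrow SL_n(A)$---the strict equivalence $\pi\circ\rho_R\sim \rho_A$ produces an explicit matrix $M\in I+M_n(\m_A)$ with $\pi(\rho_R(g))=M^{-1}gM$ for every $g\in\Gamma$. Conjugation by $M$ is an automorphism of $\Gamma$, so $\pi\circ\rho_R:\Gamma\to SL_n(A)$ is itself a group isomorphism. Part (i) is then immediate: $\rho_R$ is injective because its composition with $\pi$ is, and the restriction $\pi|_{\rho_R(\Gamma)}$ is both injective (conjugation by $M$ cancels) and surjective onto $SL_n(A)$.

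For (ii), the equality $\pi(\rho_R(\Gamma))=SL_n(A)$ places every matrix of $SL_n(A)$, entry-by-entry, inside $M_n(\pi(R))$. Any $x\in A$ appears as the $(1,2)$-entry of the elementary matrix $E_{12}(x)\in SL_n(A)$, whence $x\in\pi(R)$; so $\pi$ is surjective.

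Part (iii) is where the inclusion $SL_n(W_R)\subseteq\rho_R(\Gamma)$ supplied by Step 1 does real work. Surjectivity of $\pi|_{W_R}\colon W_R\to W_A$ is automatic from $\pi\circ\iota_R=\iota_A$. For injectivity, $W_R$ and $W_A$ are quotients of the DVR $W=W(\pmb{k})$, so $W_R=W/(p^a)$ and $W_A=W/(p^b)$ with $a\geq b$ (allowing $a=\infty$). Assume for contradiction that $a>b$; then $\iota_R(p^b)$ is a nonzero element of $W_R$, so $g_0:=E_{12}(\iota_R(p^b))$ is a non-identity element of $SL_n(W_R)\subseteq\rho_R(\Gamma)$. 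By the injectivity of $\rho_R$ just established, $g_0$ has a unique preimage $h_0\in\Gamma$, and then applying $\pi$ gives $M^{-1}h_0M=E_{12}(\iota_A(p^b))=E_{12}(0)=I$, forcing $h_0=I$ and contradicting $\rho_R(h_0)=g_0\neq I$.

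The main obstacle is (iii): parts (i) and (ii) are formal consequences of strict equivalence together with the fact that $\rho_A$ is literally the identity embedding, whereas (iii) requires us to detect the $W$-algebra structure of $R$ by exhibiting specific elementary matrices inside $\rho_R(\Gamma)$---which is exactly what Proposition \ref{main prop} makes available.
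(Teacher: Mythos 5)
Your proof is correct and runs along essentially the same lines as the paper's: parts (i) and (ii) are read off from strict equivalence and the fact that $\rho_A$ is literally the inclusion, and part (iii) is detected via elementary matrices in $SL_n(W_R)\subseteq\rho_R(\Gamma)$ supplied by Step 1 together with $\pi\circ\iota_R=\iota_A$. The only cosmetic difference is that the paper compares the orders of $\gamma$ and $E_{12}(\pi(1))$ for a $\gamma$ with $\rho_R(\gamma)=E_{12}(1)$, whereas you run the equivalent contradiction argument with $E_{12}(\iota_R(p^b))$.
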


\begin{proof} Part (i) follows from the observations that $\pi\circ\rho_R$ is strictly equivalent to
$\rho_A$, and that $\rho_A$ is an isomorphism. Part (ii) is then immediate.

For part (iii), the discussion in step (1) allows us to pick a $\gamma$ in $\Gamma$ with $\rho_R(\gamma)=E_{12}(1)$. Now $\rho_A(\gamma)$ and $E_{12}(\pi(1))$ have the same order (as they are conjugates), and we may conclude that the
restriction  $\pi|_{W_R}: W_R\to W_A$ must be an isomorphism.
\end{proof}

The above assertion allows us to identify $W_R$ and $W_A$. Henceforth, \emph{we will not differentiate between $\iota_R(x)$ and $\iota_A(x)$ for $x\in W$.}
\end{step}

\begin{step}[Step 3] We will now use what we have shown so far to show that with regard
to the group isomorphism $\pi: \rho_R(\Gamma)\to SL_n(A)$ the inverse image of an
elementary matrix in $SL_n(A)$ is an elementary matrix in $SL_n(R)$. This allows us to
construct a $\cnl$-ring homomorphism $A\to R$ which is a section for $\pi:R\to A$.

We first observe the following lemma.
\begin{lemma}\label{lemma} If $x\in A$ then there exist a unique $\lambda_x$ in $R^\times$ and a unique $s(x)$ in $R$ such that the following holds: $\lambda_xE_{1n}(s(x))\in \rho_R(\Gamma)$ and $\pi(\lambda_xE_{1n}(s(x)))=E_{1n}(x)$.
\end{lemma}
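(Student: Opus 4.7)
My plan is to use Proposition \ref{transpositions}(i), which characterises the matrices of the form $\lambda E_{1n}(x)$ as exactly those that commute with every $E_{ij}(1)$ for $1\le i<j\le n$. Proposition \ref{assertion1}(i) says that $\pi$ restricts to a group isomorphism from $\rho_R(\Gamma)$ onto $SL_n(A)$, so there is a unique element $Y_x\in\rho_R(\Gamma)$ with $\pi(Y_x)=E_{1n}(x)$. The existence part of the lemma will follow once I show that this $Y_x$ has the form $\lambda_x E_{1n}(s(x))$.

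To verify the required commutations I will use Step 1, which arranges $SL_n(W_R)\subseteq\rho_R(\Gamma)$. In particular each $E_{ij}(1)$ already lies in $\rho_R(\Gamma)$ as an element coming from $\Z\subseteq W_R$, and after the identification of $W_R$ with $W_A$ in Step 2, we have $\pi(E_{ij}(1))=E_{ij}(1)$ inside $SL_n(A)$. In $SL_n(A)$, the Steinberg relation of Proposition \ref{steinberg}(c) gives $E_{1n}(x)E_{ij}(1)=E_{ij}(1)E_{1n}(x)$ whenever $1\le i<j\le n$, since then $i\le n-1$ (so $i\ne n$) and $j\ge 2$ (so $j\ne 1$). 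Transporting this identity back through the group isomorphism $\pi|_{\rho_R(\Gamma)}$ yields $Y_x E_{ij}(1)=E_{ij}(1)Y_x$ for each such pair $i,j$.

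Applying Proposition \ref{transpositions}(i) to $Y_x\in GL_n(R)$ then produces the desired $\lambda_x\in R^\times$ and $s(x)\in R$ with $Y_x=\lambda_x E_{1n}(s(x))$, and by construction $\pi(Y_x)=E_{1n}(x)$. For uniqueness, any equality $\lambda E_{1n}(s)=\lambda' E_{1n}(s')$ forces $\lambda=\lambda'$ by comparing the $(1,1)$-entry and then $s=s'$ by comparing the $(1,n)$-entry.

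I do not expect a substantive obstacle here: the lemma is really an assembly of Step 1, Step 2, and Proposition \ref{transpositions}, and the only point that requires care is that the commutation relations verified in $SL_n(A)$ can genuinely be pulled back to $\rho_R(\Gamma)$. This is precisely why Step 1 was set up to place the entire group $SL_n(W_R)$ (and in particular all $E_{ij}(1)$) inside $\rho_R(\Gamma)$, and why Step 2 was used to identify $W_R$ with $W_A$ so that $\pi$ sends these distinguished generators to themselves.
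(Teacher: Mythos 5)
Your proposal is correct and follows essentially the same route as the paper: both proofs identify the unique $\pi$-preimage of $E_{1n}(x)$ in $\rho_R(\Gamma)$ using Proposition \ref{assertion1}(i), use Step 1 and the identification of Step 2 to place the matrices $E_{ij}(1)$ with $1\le i<j\le n$ inside $\rho_R(\Gamma)$ as fixed points of $\pi$, transport the Steinberg commutation relations back through the isomorphism, and then apply Proposition \ref{transpositions}(i) to conclude. Your write-up is slightly more explicit about the pull-back step and about the entry-wise argument for uniqueness, but the substance is identical to the paper's proof.
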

\begin{proof} Uniqueness is immediate from Proposition \ref{assertion1}(i).
For existence, let $X\in \rho_R(\Gamma)$ satisfy $\pi(X)=E_{1n}(x)$. Now $E_{1n}(x)$ commutes with the elementary matrices $E_{ij}(1)$ where $1\leq i<j\leq n$. Then by Proposition \ref{assertion1} and our identification of $W_R$ with $W_A$, the elementary matrices  $E_{ij}(1)$  with $1\leq i<j\leq n$ are in $\rho_R(\Gamma)$ and commute with $X$. Hence by Proposition \ref{transpositions}
we must have $X=\lambda_x E_{1n}(s(x))$ for some $s(x)$ in $ R$  and $\lambda_x $ in $R^{\times}$.
\end{proof}

Now let $x\in A$ and let $s(x)$, $\lambda_x$ in $R$ be as in Lemma \ref{lemma}. From the preceding two steps, the signed
permutation matrices $T_{ij}$s, as defined by the relations (\ref{Tij defn}), are matrices in $\rho_R(\Gamma)$. Since
$\lambda_xE_{ij}(s(x)))=T_{ij}\lambda_xE_{1n}(s(x))T_{ij}^{-1}$ by Proposition \ref{transpositions}, we see that $\lambda_x E_{ij}(s(x))$ is in $\rho_R(\Gamma)$ and is the unique pre-image of
$E_{ij}(x)$ for any $1\leq i\neq j\leq n$. Note that if
$x\in W_A$ then $\lambda_x=1$ and $s(x)=x$ (as we are identifying $W_A$ and $W_R$).

We will now show that $\lambda_x$ is in fact $1$.  Let $i$, $j$, $k$ be three distinct integers in $\{1,2,\ldots ,n\}$.
By considering their inverse images in $\rho_R(\Gamma)$, the relation $E_{ij}(x)= E_{ik}(x)E_{kj}(1)E_{ik}(x)^{-1}E_{kj}(1)^{-1}$ then implies that
\begin{align*}
\lambda_xE_{ij}(s(x))
&=\lambda_xE_{ik}(s(x))E_{kj}(1)\lambda_x^{-1}E_{ik}(s(x)))^{-1}E_{kj}(1)^{-1}\\
&=E_{ij}(s(x)),
\end{align*}
and hence $\lambda_x=1$.

We can now  define the desired section to $\pi : R\to A$.
\begin{proposition}\label{defn of map s}
The function $s: A\to R$ characterised by the following property is well defined:
\begin{quote} If $x\in A$ then  $s(x)$ is the unique element  in $R$ such that $\pi(s(x))=x$
and  the elementary matrix $E_{ij}(s(x))$ is a matrix in $\rho_R(\Gamma)$ for all $1\leq i\neq j\leq n$.\end{quote}
Moreover, the map $s:A\to R$ is in fact a $\cnl$-ring homomorphism.
\end{proposition}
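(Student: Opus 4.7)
The plan is to collect what has already been established in the lead-up to the proposition, verify that the characterization defines $s$ consistently, and then derive the ring-homomorphism properties directly from the Steinberg relations.

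For well-definedness, I would first note that the preceding discussion has essentially produced the function: Lemma \ref{lemma} together with the computation showing $\lambda_x=1$ identifies $E_{1n}(s(x))$ as the unique element of $\rho_R(\Gamma)$ mapping to $E_{1n}(x)$. Then, since each signed permutation matrix $T_{ij}$ has entries in $W_R$ and therefore lies in $\rho_R(\Gamma)$ under the identification $W_R=W_A$ from Step 2, the conjugation formula $T_{ij}E_{1n}(s(x))T_{ij}^{-1}=E_{ij}(s(x))$ of Proposition \ref{transpositions}(ii) shows that $E_{ij}(s(x))\in\rho_R(\Gamma)$ for every pair $i\neq j$. Uniqueness of $s(x)$ follows from the injectivity of $\rho_R$ (Proposition \ref{assertion1}(i)) together with the uniqueness in Lemma \ref{lemma}: any $y\in R$ satisfying the characterizing property satisfies $E_{1n}(y)\in\rho_R(\Gamma)$ and $\pi(E_{1n}(y))=E_{1n}(x)$, forcing $y=s(x)$.

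For the ring-homomorphism properties, I would exploit the fact that $\pi$ restricts to a group isomorphism $\pi|_{\rho_R(\Gamma)}\colon \rho_R(\Gamma)\to SL_n(A)$ and apply its inverse to the Steinberg relations of Proposition \ref{steinberg}. Additivity comes from $E_{1n}(x)E_{1n}(y)=E_{1n}(x+y)$, which pulls back to $E_{1n}(s(x))E_{1n}(s(y))=E_{1n}(s(x+y))$; since $E_{1n}(s(x)+s(y))$ lies in $\rho_R(\Gamma)$ and maps to $E_{1n}(x+y)$, the characterizing property yields $s(x+y)=s(x)+s(y)$. Multiplicativity comes from the commutator relation $[E_{1k}(x),E_{kn}(y)]=E_{1n}(xy)$ for some intermediate index $k\in\{2,\ldots,n-1\}$ (which exists because $n\geq 3$, precisely the point at which this hypothesis is used); pulling back this identity and invoking the characterizing property again gives $s(xy)=s(x)s(y)$.

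Finally, $s(0)=0$ and $s(1)=1$ are immediate from the identification $W_R=W_A$ made in Step 2, since that identification gave $s(x)=x$ for $x\in W_A$; the same identification supplies compatibility with the $W$-algebra structures, i.e.\ $s\circ\iota_A=\iota_R$. The map $s$ is local because $\pi(s(x))=x$ and $\pi$ being local force $s(\m_A)\subseteq\m_R$, and a local homomorphism between complete noetherian local rings is automatically continuous in the $\m$-adic topology. I do not anticipate any serious obstacle: the substantive work has already been completed in Steps 1--3 via Proposition \ref{main prop} and the verification that $\lambda_x=1$, and what remains is the routine translation of the Steinberg relations across the isomorphism $\pi|_{\rho_R(\Gamma)}^{-1}$.
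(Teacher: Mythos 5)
Your proposal is correct and follows essentially the same route as the paper: well-definedness is the content of Lemma \ref{lemma} and the discussion following it (including $\lambda_x=1$ and the $T_{ij}$-conjugations), additivity comes from Steinberg relation (a), multiplicativity from the commutator relation (b) pulled back through $\pi|_{\rho_R(\Gamma)}^{-1}$, and the $W$-algebra and locality properties from the identification $W_R=W_A$ and $\pi\circ s=\mathrm{id}$.
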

\end{step}

\begin{proof} We have already covered the characterising property defining $s:A\to R$ in Lemma \ref{lemma}
 and the discussion following it.

 We shall now show that the map $s:A\to R$ is a $\cnl$-ring homomorphism. It follows immediately from the construction and Proposition \ref{steinberg}(a) that $s(x+y)=s(x)+s(y)$ for all $x,y\in A$, that $s|_{W_A}$ is the inverse to $\pi|_{W_R}$, and that
$\pi\circ s$ is the identity on $A$. If $1\leq i,j,k\leq n$ are three distinct integers, then the commutation relation $[E_{ij}(s(x)),E_{jk}(s(y))]=E_{ik}(s(x)s(y))$ shows that $s(xy)=s(x)s(y)$. Thus $s:A\to R$ is a ring homomorphism. By construction $s(\m_A)\subseteq \m_R$ and $s$ induces the identity on $A/\m_A=R/\m_R=\pmb{k}$.
\end{proof}

\begin{step}[Step 4] To complete  the proof of the theorem, we have to verify that $\rho_A: \Gamma\to SL_n(A)$ is the universal deformation. Since  the elementary matrices $E_{ij}(x)$ generate $SL_n(A)$ by Proposition \ref{steinberg}, we deduce that the elementary matrices $E_{ij}(s(x))$ generate $\rho_R(\Gamma)$. As $\pi \circ s$ is the identity on $A$, we can now conclude that $s\circ\pi\circ\rho_R= \rho_R$.
By universality, the homomorphism $s\circ\pi:R\to R$ must be the identity on $R$. Thus $\pi: R\to A$ is an isomorphism with inverse $s:A\to R$ and $\pi\circ \rho_R$, resp. $s\circ \rho_A$, is strictly equivalent to $\rho_A$, resp. $\rho_R$. The theorem follows.
\end{step}

\section{Proof of  Proposition \ref{main prop}}\label{prop proof}

As discussed in Section \ref{prelim} the key result, Proposition \ref{main prop}, follows from the main theorem of \cite{manoharmayum} if $\pmb{k}$ has cardinality at least $5$, or $\pmb{k}=\F_4$ and $n\geq 4$. Therefore, we will now investigate the other cases. To this end, we assume the following setup.
\begin{setup}\label{setup} Throughout this section:
\begin{itemize}
\item The finite field $\pmb{k}$ is either $\F_2$ or $\F_3$ or $\F_4$, and   $p$ denotes its characteristic. We set $W_m:=W(\pmb{k})/p^m$.
\item $n$ is a fixed integer subject to the following condition:
if $\pmb{k}=\F_2$ then  $n\geq 5$,
if $\pmb{k}=\F_3$ then  $n\geq 3$, and
if $\pmb{k}=\F_4$ then $n=3$.

\item $\M$, resp. $\M_0$, denotes
the space of $n$ by $n$  matrices over $\pmb{k}$, resp. the space of $n$ by $n$  matrices over $\pmb{k}$ with trace $0$. When $p|n$, we set $\s:=\pmb{k}I$ and $\V=\M_0/\s$.
\end{itemize}
\end{setup}

We remark that if $(A,\m_A)$ is a
$\cnl$-ring then $GL_n(A)$ acts on $\M$ and $\M_0$
by conjugation. We  make free use of standard results on group extensions and cohomology (see \cite{brown}, \cite{NSW}); section 2 of \cite{manoharmayum} covers what will be needed.

The following proposition gathers the various  properties of $SL_n(W_m)$
that will be needed in the proof of Proposition \ref{main prop}.
\begin{proposition}\label{SL_n(W_m)} With the above set up, we have:
\begin{enumerate}[(i),leftmargin=*]
\item \label{fact1} If $p\nmid n$ then  $\M_0$ is an irreducible $SL_n(\pmb{k})$-module; if $p|n$ then $\s$ is the unique non-trivial $SL_n(\pmb{k})$-submodule of $\M_0$. Moreover,
\[\Hom_{SL_n(\pmb{k})}(\M_0,\M_0)\cong \pmb{k}\cong \Hom_{SL_n(\pmb{k})}(\V,\V).\]

\item  \label{fact2} Let $\Gamma_m:=\ker(SL_n(W_{m+1})\xrightarrow{\mod{p^m}}SL_n(W_m))$. Then the extension
\[ I\to\Gamma_m\to  SL_n(W_{m+1})\to SL_n(W_m)\to I \]
does not split.

\item \label{fact3} Suppose $p|n$. Then $H^1(SL_n(\pmb{k}), \pmb{k})$ and  $H^2(SL_n(\pmb{k}), \pmb{k})$ are both $(0)$. Furthermore $H^1(SL_n(W_m),\pmb{k})=(0)$ for all $m\geq 1$.

\item\label{fact4} The inflation map
$H^1(SL_n(W/p^m), \M_0)\to H^1(SL_n(W/p^{m+1}),\M_0)$
 is an isomorphism. Consequently \[ H^1(SL_n(W/p^m), \M_0)\cong  H^1(SL_n(\pmb{k}), \M_0)
=\begin{cases}
(0) & \text{if}\quad p\nmid n,   \\
\pmb{k}&\text{if}\quad p| n.
\end{cases}
\]

\item \label{fact5} Suppose now $p|n$.  Then
\begin{enumerate}[(a)]
\item  If $Z_m$ denotes the
subgroup of scalar matrices in $\Gamma_m$, then the extension
\begin{equation}\label{extn1}
 I\xrightarrow{}\Gamma_m/Z_m\xrightarrow{} SL_n(W_{m+1})/Z_m\xrightarrow{\mod{p^m}} SL_n(W_m)
\xrightarrow{}I.
\end{equation} does not split.
\item The inflation map
$H^1(SL_n(W_{m}),\V)\to H^1(SL_n(W_{m+1}),\V)$ is an isomorphism.
\item The map $H^2(SL_n(W_m),\s)\to  H^2(SL_n(W_m),\M_0)$ induced by the inclusion
$\s\subset\M_0$
is an injection.
\end{enumerate}
\end{enumerate}
\end{proposition}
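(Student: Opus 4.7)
The plan is to prove the proposition one part at a time: (i) and (ii) by direct matrix computation, (iii) from the perfectness statement in Proposition \ref{steinberg} combined with the cited results of Sah, and (iv)--(v) via the inflation-restriction sequence attached to the extension in (ii). The technical heart of the argument is a single cocycle-level identification of the transgression differential with the class of that extension, and this is the main obstacle I anticipate.

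For (i) I would study the conjugation action of the elementary matrices $E_{ij}(1)$ on $\M_0$: a short commutator calculation forces any nonzero $SL_n(\pmb{k})$-submodule to contain some off-diagonal matrix unit, and further applications of the $E_{kl}(1)$ then sweep out every such unit and every traceless diagonal matrix, so that the only proper nontrivial submodule is $\s=\pmb{k}I$ (which sits inside $\M_0$ exactly when $p\mid n$). The $\Hom$-computations are then Schur-type arguments using weights for the diagonal torus. For (ii), the elementary matrix $E_{12}(1)\in SL_n(W_m)$ has order $p^m$, while a direct computation (using that $\Gamma_m$ is killed by $p$ so commutes with everything modulo $p^{m+1}$) shows that every lift in $SL_n(W_{m+1})$ has order exactly $p^{m+1}$, which rules out any section. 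For (iii), the $H^1$-vanishings follow from the perfectness statements in Proposition \ref{steinberg} together with the identification $H^1(G,\pmb{k})=\Hom(G,\pmb{k})$, and the $H^2$-statement in the small-field cases is a direct quotation from \cite{sah1}, \cite{sah2}.

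For (iv) I would apply the inflation-restriction sequence to the extension in (ii), using the $SL_n(W_m)$-equivariant identification $\Gamma_m\cong \M_0$ via $I+p^m X\mapsto X$:
\[
0\to H^1(SL_n(W_m),\M_0)\to H^1(SL_n(W_{m+1}),\M_0)\to \Hom_{SL_n(\pmb{k})}(\M_0,\M_0)\xrightarrow{d_2} H^2(SL_n(W_m),\M_0).
\]
By (i) the Hom-space is one-dimensional, spanned by the identity map. A cocycle computation should show that $d_2$ of this class is the pushout of the extension in (ii) along $\Gamma_m\hookrightarrow\M_0$, which is nonzero by (ii); hence $d_2$ is injective and inflation is an isomorphism. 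Iterating back to $m=1$ identifies $H^1(SL_n(W_m),\M_0)$ with $H^1(SL_n(\pmb{k}),\M_0)$, whose value ($0$ if $p\nmid n$, $\pmb{k}$ if $p\mid n$) is the classical Cline--Parshall--Scott calculation, supplied in the small-field cases by Sah.

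Part (v) refines these ideas in the central-extension setting. For (a), any splitting of (\ref{extn1}) lifts through the central $Z_m\cong\pmb{k}$ with obstruction in $H^2(SL_n(W_m),\pmb{k})$; a further inflation-restriction reduction to $H^2(SL_n(\pmb{k}),\pmb{k})=0$ together with the $H^1$-vanishings from (iii) makes this obstruction trivial, so a splitting of (\ref{extn1}) would produce one of (ii), contradicting (ii). For (b), the same inflation-restriction argument as in (iv) applies with $\V$ in place of $\M_0$: by (i) $\Hom_{SL_n(\pmb{k})}(\V,\V)=\pmb{k}$, and the transgression detects the non-split class from (a). For (c), the long exact cohomology sequence of $0\to\s\to\M_0\to\V\to 0$ turns injectivity of $H^2(SL_n(W_m),\s)\to H^2(SL_n(W_m),\M_0)$ into surjectivity of $H^1(SL_n(W_m),\M_0)\to H^1(SL_n(W_m),\V)$, which follows from the calculations in (iv) and (b). The cocycle identification of $d_2$ with the extension class in (iv) and (v)(b) is, as anticipated, the main technical obstacle; it mirrors the argument in \cite{manoharmayum} specialised to the excluded small characteristics.
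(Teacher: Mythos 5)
Your overall strategy mirrors the paper's (transgression via inflation--restriction, with the small-field cohomology supplied externally), and parts (i), (iii), (iv) and (v)(b)--(c) are essentially in line with the paper's proof; your observation that the $H^1$-vanishings in (iii) follow directly from perfectness of $SL_n(W_m)$ is even a mild simplification over the paper's inflation--restriction argument. However, the proposed proof of (ii) contains a genuine error, and it is an error precisely where the paper needs to invoke Sah.

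The parenthetical justification for (ii) is false: $\Gamma_m$ does not commute with all of $SL_n(W_{m+1})$ modulo $p^{m+1}$ (it is central only in the kernel of reduction modulo $p$, and an elementary matrix $E_{ij}(1)$ is not in that kernel). More seriously, the conclusion drawn from it --- that every lift of $E_{12}(1)$ to $SL_n(W_{m+1})$ has order $p^{m+1}$ --- is simply wrong when $m=1$. For instance with $\pmb{k}=\F_2$, $n\geq 3$ and $m=1$, write a lift as $g=E_{12}(1)(I+2M)$ with $M\in\M_0$; then
\[
g^2 \;=\; I + 2\bigl(e_{12}+M+E_{12}(-1)ME_{12}(1)\bigr) \pmod{4},
\]
and one can solve $e_{12}+M+E_{12}(-1)ME_{12}(1)\equiv 0\pmod 2$ subject to $\mathrm{tr}(M)=0$ (take $M=e_{11}+e_{33}$, for example), producing a lift of order $2$. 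So a section cannot be ruled out by an order count at $m=1$. The conjugation-cancellation you are implicitly relying on does work for $m\geq 2$, because the $p^m$ conjugates of the correction term then sum to zero modulo $p^{m+1}$, and this is in fact the content of the cited Proposition 3.7 of \cite{manoharmayum}; but the $m=1$ case is a real cohomological fact, covered in the paper by Theorem II.7 of \cite{sah1} (for $\F_2$, $\F_3$) and Proposition 0.3 of \cite{sah2} (for $\F_4$, via the restriction from $GL_3$ to $SL_3$). Your proposal gives no route to (ii) at $m=1$, which is exactly the input the whole section is designed to supply.

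A secondary gap concerns (v)(a). You propose to lift a putative section of the extension \eqref{extn1} through the central $Z_m\cong\pmb{k}$, with obstruction in $H^2(SL_n(W_m),\pmb{k})$, and to kill that obstruction by inflation--restriction and part (iii). But (iii) only gives $H^2(SL_n(\pmb{k}),\pmb{k})=0$; the five-term inflation--restriction sequence does not by itself yield $H^2(SL_n(W_m),\pmb{k})=0$, nor is it clear that the specific obstruction class is inflated from $SL_n(\pmb{k})$. The paper avoids this by showing (a) and (b) are equivalent via a transgression comparison for $\V$ and $\M_0$, then settling $m\geq 2$ by a cited lemma and $m=1$ by a commutative-diagram argument. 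Your (v)(a) as written needs an actual argument in place of the appeal to vanishing of $H^2(SL_n(W_m),\pmb{k})$.
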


\begin{proof}
\emph{Part (i):} See Lemma 3.3 of \cite{manoharmayum}.

\medskip
\noindent\emph{Part (ii):} When $m\geq 2$, the non-splitting is covered by the argument in Proposition 3.7 of \cite{manoharmayum}. (See the paragraph around the displayed relation (3.5) there, \emph{loc. cit.})
The case when $\pmb{k}=\F_2$ or $\F_3$, and $m=1$   is Theorem II.7 of \cite{sah1}.

Suppose now $\pmb{k}=\F_4$ and $m=1$. If $\Gamma$ denotes the kernel of the reduction map $GL_3(W/p^2)\to GL_3(\pmb{k})$, then the sequence
\[ I\to \Gamma\to GL_3(W/p^2)\to GL_3(\pmb{k})\to I \]
does not  split by Proposition 0.3 of \cite{sah2}. Let $\widetilde{G}$ be the subgroup of $GL_3(W/p^2)$ consisting of matrices with determinant $1$ modulo $p$. The injectivity of the  restriction map
\[ H^2(GL_3(\pmb{k}),\M)\to H^2(SL_3(\pmb{k}),\M)\]
then  implies that $ I\to \Gamma\to \widetilde{G}\to SL_3(\pmb{k})\to I$ is non-split. Consequently
$I\to \Gamma_1\to SL_3(W/p^2)\to SL_3(\pmb{k})\to I$  can not be split.

\medskip\noindent\emph{Part (iii):} See Proposition III.7 of \cite{sah1} for the first part. Now \[ H^1(\Gamma_m,\pmb{k})^{SL_n(\pmb{k})}\cong \Hom_{SL_n(\pmb{k})}(\M_0,\pmb{k})=(0)\]
by part \ref{fact1}  above. Inflation--restriction then implies that
$H^1(SL_n(W_m),\pmb{k})=(0)$ for all $m\geq 1$.

\medskip\noindent\emph{Part (iv):} Set $\phi :\Gamma_m\to \M_0(\pmb{k})$  to be the identification given by
$\phi(I+p^mM):=M\mod{p}$.
The transgression map
\[
\delta: H^1(\Gamma_m, \M_0)^{SL_n(W/p^m)}\to H^2(SL_n(W/p^m),\M_0(\pmb{k}))
\]
sends $-\phi$ to
the class of the extension
\[
0\to \M_0(\pmb{k})\xrightarrow{\phi^{-1}} SL_n(W/p^{m+1})\to SL_n(W/p^m)\to 1\] in
$ H^2(SL_n(W/p^m),\M_0(\pmb{k}))$ (see Proposition 2.1 of \cite{manoharmayum}).  Since $H^1(\Gamma_m, \M_0)^{SL_n(W/p^m)}$ has dimension $1$ as
a $\pmb{k}$-vector space by part \ref{fact1}, and  $\delta(-\phi)\neq 0$ as the above extension is non-split by part \ref{fact2}, the transgression map $\delta$ is injective and the claim follows.

The group $H^1(SL_n(W/p^m), \M_0)$ is completely determined by $H^1(SL_n(\pmb{k}), \M_0)$. Now $H^1(SL_3(\F_4),\M_0)=(0)$ is covered by \cite{CPS}. For the case when $\pmb{k}=\F_2$ or $\F_3$, first note that  $H^1(SL_n(\pmb{k}),\M)=(0)$ by Theorem III.5 of \cite{sah1}. When $p\nmid n$ the direct sum decomposition $\M=\M_0\oplus \pmb{k}I$ implies $H^1(SL_n(\pmb{k}),\M)=(0)$; if $p|n$
the exact sequence
 $ 0\to \M_0\to \M\to \pmb{k}\to 0$ along with part \ref{fact3} implies that the
 connecting map $H^0(SL_n(\pmb{k}),\pmb{k})\to H^1(SL_n(\pmb{k}),\M_0)$ is an isomorphism.

\medskip\noindent\emph{Part (v):} We give a brief sketch; see    Section 3.3 of
\cite{manoharmayum} for details. Sub-parts (a) and (b) are equivalent (for a fixed $m$) by an argument similar  to part \ref{fact4} and using
\[
H^1(\Gamma/Z,\V)^{SL_n(W_{m})}\cong \Hom_{SL_n(W_{m})}(\V,\V)\cong \pmb{k}.\]
 The splitting of the extension (\ref{extn1}) when $m\geq 2$ is then covered by Lemma 3.9 of
 \cite{manoharmayum}.

 For $m=1$ consider      \begin{equation*}\begin{CD}
H^1(\Gamma_1,\M_0)^{SL_n(\pmb{k})}
@>{\delta}>>  H^2(SL_n(\pmb{k}),\M_0) \\
@VVV   @VVV        \\
H^1(\Gamma_1,\V)^{SL_n(\pmb{k})}
@>{\delta}>>  H^2(SL_n(\pmb{k}),\V)
\end{CD}\end{equation*} where the vertical maps come from
$0\to\s\to\M_0\to \V\to 0$.
The left hand arrow
is an isomorphism by part \ref{fact1}, the right hand arrow is an isomorphism by part \ref{fact3}, and the top arrow is an isomorphism by part \ref{fact4}.  Hence the bottom arrow is also an injection and therefore the inflation map
$H^1(SL_n(\pmb{k}),\V)\to H^1(SL_n(W(\pmb{k})/p^2),\V)$ is an isomorphism.

Now for (c). Note that (b) together part \ref{fact2} gives
\[
H^1(SL_n(W_m),\V)\cong H^1(SL_n(\pmb{k}),\V)\cong H^1(SL_n(\pmb{k}),\M_0)\cong \pmb{k}\]
for all $m\geq 1$.
The exact sequence $0\to \s\to \M_0\to \V\to 0$, together with parts \ref{fact2}, \ref{fact4} and (b) above, now implies that
\[ H^2(SL_n(W_m),\s)\to  H^2(SL_n(W_m),\M_0)\]
is an injection.
\end{proof}

We recall that for an artinian $\cnl$-ring the annihilator of its maximal ideal is non-trivial. As a $\cnl$-ring is the inductive limit of its artinian quotients  Proposition \ref{main prop}  follows from the following special case.

\begin{proposition} We continue with the set up of (\ref{setup}). Let $(A,\m_A)$ be an artinian $\cnl$-ring and let $G$ be a subgroup of $SL_n(A)$. Assume that
$G \mod{t}=SL_n(W_{A/(t)})$ where $t\in A$ is non-zero and satisfies $t\m_A=0$.  Then there is an $X\in SL_n(A)$ with
 $X\equiv I\pmod{t}$ such that $ SL_n(W_A)\subseteq XGX^{-1}$.
\end{proposition}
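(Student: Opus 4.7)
The plan is to set $K := \ker\bigl(SL_n(A)\to SL_n(A/(t))\bigr)$ and $H := G \cap K$, then study the resulting extension
\[
1 \to H \to G \to SL_n(W_{A/(t)}) \to 1.
\]
Since $t\m_A = 0$, any element of $K$ has the form $I+tM$ with $(tM)^2 = 0$, so $\det(I+tM) = 1 + t\operatorname{tr}M$ and the assignment $I+tM \mapsto M$ identifies $K$ with the $SL_n(\pmb{k})$-module $\M_0$. Because $G$ acts on $H$ through $\overline{G} := SL_n(W_{A/(t)})\twoheadrightarrow SL_n(\pmb{k})$, Proposition~\ref{SL_n(W_m)}\ref{fact1} restricts $H$ to one of $0$, $\s$ (only if $p\mid n$), or $\M_0$.

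Next I would split on whether $t \in W_A$. Since $A$ is artinian, $W_A = W/p^m$ for some $m\geq 1$; if $t\in W_A$, then $t\m_A=0$ forces $(t)=(p^{m-1})$ to be the socle of $W_A$, so $W_{A/(t)}=W/p^{m-1}$ and an entrywise check gives $\pi^{-1}(\overline{G}) = SL_n(W_A)$. Therefore $G\subseteq SL_n(W_A)$, and the extension above is precisely the standard one of Proposition~\ref{SL_n(W_m)}\ref{fact2}. The case $H=0$ would produce a splitting of this non-split extension; the case $H=\s$ (only possible if $p\mid n$) would yield a splitting, by passing to $G/\s$, of the non-split extension in Proposition~\ref{SL_n(W_m)}\ref{fact5}(a); both are impossible. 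Hence $H=\M_0$, and a count of orders forces $G = SL_n(W_A)$, so $X=I$ works.

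Now suppose $t\notin W_A$. Then $W_A \xrightarrow{\sim} W_{A/(t)}$ and $SL_n(W_A)\hookrightarrow SL_n(A)$ splits the reduction, giving $\pi^{-1}(\overline{G}) = K \rtimes SL_n(W_A)$. The case $H=\M_0$ makes $G = \pi^{-1}(\overline{G}) \supseteq SL_n(W_A)$ immediately. Otherwise, up to conjugation by $I+t\M_0$, the subgroup $G$ is determined by a class $[c_G] \in H^1\bigl(\overline{G},\,\M_0/H\bigr)$, and we must find $X \equiv I \pmod t$ that trivializes $[c_G]$. When $p\nmid n$ and $H=0$, Proposition~\ref{SL_n(W_m)}\ref{fact4} gives $H^1(\overline{G},\M_0)=0$ so there is nothing to do. The main obstacle is the case $p\mid n$ with $H\in\{0,\s\}$, where $H^1$ is one-dimensional over $\pmb{k}$: the generator (from the connecting-map argument in the proof of Proposition~\ref{SL_n(W_m)}\ref{fact4}) is represented by the cocycle $g\mapsto gEg^{-1}-E$ for any $E\in M_n(\pmb{k})$ with $\operatorname{tr}E\neq 0$, which is exactly the coboundary induced by conjugation with $I+tE$. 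A suitable scalar multiple $X = I+\alpha tE \equiv I\pmod t$ therefore trivializes $[c_G]$ and produces the desired $X$ with $XGX^{-1}\supseteq SL_n(W_A)$.
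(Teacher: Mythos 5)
Your proof is correct and follows the same overall strategy as the paper: reduce to the extension
\[
1 \to \M_0 \to \widetilde{G} := \pi^{-1}\bigl(SL_n(W_{A/(t)})\bigr) \to SL_n(W_{A/(t)}) \to 1,
\]
identify $H := G\cap\M_0$ as one of $0$, $\s$, $\M_0$, rule out the small cases when $W_A \twoheadrightarrow W_{A/(t)}$ is not injective by the non-splitting results of Proposition~\ref{SL_n(W_m)}\ref{fact2} and \ref{fact5}(a), and otherwise trivialize an obstruction class in $H^1$. The differences are real but modest. First, you make the dichotomy $t\in W_A$ versus $t\notin W_A$ explicit at the outset, whereas the paper derives the equivalent fact $W_A = W_{A/(t)}$ a posteriori from the splitting argument; your route is arguably cleaner. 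Second, and more substantively, when $p\mid n$ the paper disposes of the cohomological obstruction in one line by quoting $H^1(SL_n(W_B),\M)=0$ (so that complements to $\M_0$ inside $\widetilde G$, viewed as complements to $\M$ inside the $GL_n$-preimage, are all $\M$-conjugate), whereas you instead identify the generator of the one-dimensional $H^1(SL_n(W_B),\M_0)$ concretely as the connecting-map cocycle $g\mapsto gEg^{-1}-E$ with $\operatorname{tr}E\neq 0$, and then observe that this is precisely the effect of conjugation by $I+tE$; this is more work but exhibits the conjugator explicitly. Third, you handle $H=\s$ by passing to $G/\s$ and $H^1(SL_n(W_B),\V)$ directly (implicitly using $H^1(SL_n(W_B),\s)=0$ from part~\ref{fact3} to see that $H^1(\M_0)\to H^1(\V)$ is an isomorphism, so your cocycle still generates), whereas the paper reduces $H=\s$ to $H=0$ by splitting off a complement $G_0\subset G$; you should spell out why killing the class of $G/\s$ in $H^1(\V)$ yields $XGX^{-1}\supseteq SL_n(W_A)$, namely that $\s$ is central and contained in both $G$ and $XGX^{-1}$. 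Finally, note that in the $p\mid n$ case your conjugator $X=I+\alpha tE$ has $\det X = 1+\alpha t\operatorname{tr}E\neq 1$, so it lies in $GL_n(A)$ rather than $SL_n(A)$; the paper's own proof has the same feature (it explicitly produces $X\in GL_n(A)$), and indeed the ambient Proposition~\ref{main prop} only claims $X\in GL_n(A)$, so the ``$X\in SL_n(A)$'' in the statement of this intermediate proposition appears to be a slip in the paper rather than a defect in your argument.
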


\begin{proof} We set $B:=A/(t)$ and  $\pi: A\to B$ to be reduction modulo $t$. Then we have an exact sequence
\begin{equation}
0\to \M_0\xrightarrow{\varepsilon} SL_n(A)\xrightarrow{\pi}SL_n(B)\to 1
\end{equation} where the map $\varepsilon: \M_0\to SL_n(A)$ is constructed as follows: Lift $x\in \M_0$ to an $n$ by $n$ matrix $\tilde{x}$ over $A$ and take $\varepsilon(x):=I+t\tilde{x}$. Denote by $\widetilde{G}$ the pre-image of $SL_n(W_B)$ in $SL_n(A)$. Thus
\begin{equation}\label{seq 1}
0\to \M_0\xrightarrow{\varepsilon}\widetilde{G}\xrightarrow{\pi}SL_n(W_B)\to 1
\end{equation}
is exact and $G$, $SL_n(W_A)$ are subgroups of $\widetilde{G}$. There are then the following three possibilities  to consider:
\begin{itemize}
\item $G=\widetilde{G}$, in which chase there is nothing to prove;
\item $\pi:G\to SL_n(W_B)$ is an isomorphism; or,
\item  $G$ fits into an exact sequence $0\to\s\to G\to SL_n(W_B)\to I$.
\end{itemize}

Suppose  $\pi:G\to SL_n(W_B)$ is an isomorphism. Then the sequence (\ref{seq 1}) splits.
Consequently  $\pi: SL_n(W_A)\to SL_n(W_B)$ must also be an isomorphism and $G$ is a
twist of $SL_n(W_A)$ by an element of $H^1(SL_n(W_B),\M_0)$.
If $p$ and $n$ are coprime then  $H^1(SL_n(W_B),\M_0)=(0)$ and we can  find $X\in SL_n(A)$ with $\pi(X)=I$ such that $XGX^{-1}=SL_n(W_A)$. When $p$ divides $n$ we  use $H^1(SL_n(W_B),\M)=(0)$ to find $X\in GL_n(A)$ with $\pi(X)=I$ such that $XGX^{-1}=SL_n(W_A)$.

We now consider the case when
\begin{equation}\label{seq 2}0\to\s\to G\to SL_n(W_B)\to I\end{equation}
 is exact.  Since $H^2(SL_n(W_B),\s)\to H^2(SL_n(W_B),\M_0)$  is injective
by Proposition \ref{SL_n(W_m)}, part v(c), the sequence (\ref{seq 2})
splits if and only if the sequence (\ref{seq 1}) splits. We cannot have $W_A=W_{m+1}$ and $W_B=W_m$ because that will contradict  the non-splitting of
 extension (\ref{extn1}). Thus  $W_A=W_B$, the sequences (\ref{seq 1}) and
  (\ref{seq 2}) split, and we are in the set up covered by the second part.
\end{proof}

\begin{remark} One can check that except for Theorem 3.4 of  \cite{manoharmayum}---which didn't influence
the argument there anyway---all the results in sections 3 and 4 of \cite{manoharmayum}
remain valid even when $\pmb{k}$ and $n$ satisfy the assumptions of (\ref{setup}). In these cases, the desired conclusions all follow from the results of Sah (\cite{sah1}, \cite{sah2})
covered in Proposition \ref{SL_n(W_m)}, either directly or with a small line of argument. We
leave the precise verification to the interested reader,  and state the following
extension of the main theorem of \cite{manoharmayum}.

\begin{theorem}\label{extended structure thm}
Let $(A,\m_A)$ be a complete local noetherian ring with maximal ideal
$\m_A$ and finite residue field $A/\m_A$ of characteristic $p$. Suppose we are given a subfield $\pmb{k}$ of $A/\m_A$ and a closed
 subgroup  $G$ of $GL_n(A)$ such that
 \begin{itemize}
  \item $n\geq 2$ and the pair $(n,|\pmb{k}|)$ is not one of the following:
 $(2,2)$, $(2,3)$, $(2,5)$, $(3,2)$, or  $(4,2)$;
 \item $G\mod{\m_A}\supseteq SL_n(\pmb{k})$. \end{itemize}
Then
     $G$ contains a conjugate of $SL_n(W_A)$.
\end{theorem}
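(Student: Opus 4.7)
The plan is to adapt the strategy used to prove Proposition \ref{main prop} above (which itself is a streamlined version of the argument from \cite{manoharmayum}), with Proposition \ref{SL_n(W_m)} supplying all the cohomological facts that were previously unavailable in the small-field regime $\pmb{k}\in\{\F_2,\F_3,\F_4\}$. A preliminary reduction takes care of the generalities: writing $A=\varprojlim A/\m_A^k$ and observing that $GL_n(A)$ is the inverse limit of the artinian $GL_n(A/\m_A^k)$, a standard profinite compactness argument produces a single conjugator $X\in GL_n(A)$ by extracting a limit of conjugators at each finite level. So we may assume $A$ is artinian. Similarly, since $W_A$ depends only on $\pmb{k}$ and the Witt length of the residue ring, one reduces harmlessly to the situation where $A/\m_A=\pmb{k}$ and $G$ already lies in $SL_n(A)$ (the passage from $GL_n$ to $SL_n$ being taken care of by a central twist, handled by Proposition \ref{SL_n(W_m)}\ref{fact3}).

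Next, I would proceed by induction on the length of $A$. Pick a nonzero $t\in A$ with $t\m_A=0$, set $B:=A/(t)$, and assume inductively that some conjugate of $G$ modulo $t$ contains $SL_n(W_B)$. Lifting the conjugating matrix to $A$ (this is possible because the reduction map $GL_n(A)\to GL_n(B)$ is surjective), we arrange that $G\bmod{(t)}\supseteq SL_n(W_B)$. Let $\widetilde G$ denote the preimage of $SL_n(W_B)$ in $SL_n(A)$; then $\widetilde G$ fits into the exact sequence
\[
0\to\M_0\xrightarrow{\varepsilon}\widetilde G\xrightarrow{\pi}SL_n(W_B)\to 1
\]
of the proof of the earlier proposition, and both $G\cap\widetilde G$ and $SL_n(W_A)$ are subgroups of $\widetilde G$. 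A direct inspection (using that $G\cap\widetilde G$ is normalized by itself modulo $t$ and that its kernel in $\M_0$ is an $SL_n(\pmb{k})$-submodule) combined with Proposition \ref{SL_n(W_m)}\ref{fact1} yields exactly the same three possibilities as before: either $G\cap\widetilde G=\widetilde G$, or $\pi$ restricts to an isomorphism $G\cap\widetilde G\xrightarrow{\sim}SL_n(W_B)$, or $G\cap\widetilde G$ sits in a central $\s$-extension of $SL_n(W_B)$.

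The first case is immediate. The second case is a torsor calculation under $H^1(SL_n(W_B),\M_0)$ when $p\nmid n$, and under $H^1(SL_n(W_B),\M)$ when $p\mid n$; both are handled by Proposition \ref{SL_n(W_m)}\ref{fact4} together with the decomposition $\M=\M_0\oplus\pmb{k}I$ (resp.\ the exact sequence $0\to\M_0\to\M\to\pmb{k}\to0$ and part \ref{fact3}), producing an $X\equiv I\pmod t$ conjugating $G\cap\widetilde G$ onto $SL_n(W_A)$. The third case is ruled out unless $W_A=W_B$: if $W_A=W_{m+1}$ strictly extends $W_B=W_m$, then the splitting of the $\s$-extension would, via the injectivity $H^2(SL_n(W_B),\s)\hookrightarrow H^2(SL_n(W_B),\M_0)$ of Proposition \ref{SL_n(W_m)}(v)(c), force the splitting of the $\M_0$-extension defining $SL_n(W_{m+1})/Z_m$ over $SL_n(W_m)$, contradicting Proposition \ref{SL_n(W_m)}(v)(a). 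Hence $W_A=W_B$, the sequence splits, and we reduce to the previous case.

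The main obstacle is the bookkeeping in case three, which is precisely the point where the original argument of \cite{manoharmayum} needed cohomological vanishing that failed for small $(n,|\pmb{k}|)$. The substitute provided by Proposition \ref{SL_n(W_m)}(v), together with the identification $W_A=W_B$ that it forces, is what makes the induction go through uniformly in the remaining cases; every other ingredient is essentially formal and has already been used in the proof of Proposition \ref{main prop} above.
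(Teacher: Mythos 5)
The paper does not actually prove Theorem~\ref{extended structure thm}; it appears inside a remark which says that, with the cohomological inputs of Proposition~\ref{SL_n(W_m)} in hand, ``all the results in sections 3 and 4 of \cite{manoharmayum} remain valid'' and leaves verification to the reader. Your proposal is therefore a fleshing out of the intended argument, and the overall inductive skeleton (reduce to artinian quotients, pass one socle step at a time, trichotomy of $G\cap\widetilde G$ inside $\widetilde G$, rule out the third case by the injectivity $H^2(\s)\hookrightarrow H^2(\M_0)$ and the non-splitting of the $\Gamma_m/Z_m$-extension) does match the paper's suggested route. That part is fine.

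There are, however, two real gaps. First, the opening reduction to ``$A/\m_A=\pmb{k}$ and $G\subseteq SL_n(A)$'' is asserted as harmless but is not justified and, as stated, is not clearly available: if $\pmb{k}\subsetneq A/\m_A$ there is no natural subring of $A$ with residue field $\pmb{k}$ into which $G$ lands, and the kernel of the socle-level reduction map is $\M_0(A/\m_A)$, not $\M_0(\pmb{k})$. The correct move (the one taken in \cite{manoharmayum}) is to keep the full residue field and note that $\M_0(A/\m_A)$ is, as an $SL_n(\pmb{k})$-module, a direct sum of copies of $\M_0(\pmb{k})$, so the cohomology computations of Proposition~\ref{SL_n(W_m)} still apply after this decomposition; the passage from $GL_n$ to $SL_n$ likewise needs an explicit normalizer/determinant argument rather than an appeal to part~(iii). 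Second, your proposal never addresses $n=2$: Setup~\ref{setup} and Proposition~\ref{SL_n(W_m)} are stated for $n\geq 3$ only (indeed $n\geq 5$ for $\F_2$), so the argument you sketch simply does not apply to $n=2$. Those pairs $(2,q)$ with $q\neq 2,3,5$ are already covered by the main theorem of \cite{manoharmayum} and must be cited separately, not absorbed into the Proposition~\ref{main prop}-style induction. A minor point: in your third case the logic is reversed --- one does not need to assume the $\s$-extension splits; rather, the mere existence of a subgroup $G$ with $0\to\s\to G\to SL_n(W_B)\to 1$ and $W_A=W_{m+1}$, $W_B=W_m$ would already yield a splitting of $I\to\Gamma_m/Z_m\to SL_n(W_{m+1})/Z_m\to SL_n(W_m)\to I$, contradicting Proposition~\ref{SL_n(W_m)}(v)(a) directly; the injectivity $H^2(\s)\hookrightarrow H^2(\M_0)$ is then used afterwards, once $W_A=W_B$ is established, to reduce to the second case.
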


The restrictions on $(n,|\pmb{k}|)$ are necessary.  It is  known (see \cite{atlas}) that $SL_4(\F_2)$ has a double cover inside $GL_4(\Z/4\Z)$, and from the orders of the groups we see that this double cover cannot contain a conjugate of $SL_4(\Z/4\Z)$. The case when $(3,\F_2)$ must be an exception because extension 
$I\to \Gamma_1\to SL_3(\Z/4\Z)\to SL_3(\F_2)\to I$ splits; the other exceptions when $n=2$ are covered in \cite{manoharmayum}.
\end{remark}

\bibliographystyle{abbrv}

\end{document}